\documentclass{amsproc}
\usepackage{amsmath,amssymb,amscd,mathrsfs,amscd, todonotes,appendix}
\usepackage{graphics,verbatim}
\usepackage{todonotes}
\usepackage{enumitem}
\usepackage{hyperref}
\usepackage{setspace}

\newcommand{\ind}{\operatorname{ind}}
\newcommand{\la}{\lambda}
\newcommand{\al}{\alpha}

\newcommand{\ga}{\gamma}
\newcommand{\ch}{\text{ch }}
\newcommand{\Ext}{\operatorname{Ext}}
\newcommand{\Hom}{\operatorname{Hom}}

\newcommand{\Fq}{\mathbb{F}_q}
\newcommand{\gfq}{G(\mathbb{F}_q)}

\newcommand{\str}{\text{St}_r}
\newcommand{\gr}{G_{r}}

\newtheorem{theorem}{Theorem}[subsection]
\newtheorem{lemma}[theorem]{Lemma}
\newtheorem{prop}[theorem]{Proposition}

\theoremstyle{definition}

\theoremstyle{remark}

\numberwithin{equation}{section}

\begin{document}

\title{On the Induction Functor from Group Algebras to Distribution Algebras}

\author{\sc Christopher P. Bendel}
\address
{Department of Mathematics, Statistics and Computer Science\\
University of
Wisconsin-Stout \\
Menomonie\\ WI~54751, USA}
\thanks{Research of the first author was supported in part by an AMS-Simons Research Enhancement Grant for PUI Faculty}
\email{bendelc@uwstout.edu}

\author{\sc Daniel K. Nakano}
\address
{Department of Mathematics\\ University of Georgia \\
Athens\\ GA~30602, USA}
\thanks{Research of the second author was supported in part by
NSF grant DMS-2401184}
\email{nakano@math.uga.edu}

\author{\sc Cornelius Pillen}
\address{Department of Mathematics and Statistics \\ University
of South
Alabama\\
Mobile\\ AL~36688, USA}
\email{pillen@southalabama.edu}

\subjclass[2020]{Primary 20G10}
\date{}

\begin{abstract}
Let $G$ be a reductive algebraic group scheme defined over ${\mathbb F}_{p}$ and $k$ be an algebraically closed field of characteristic $p$. There are two associated families of finite group schemes, the $r$-th Frobenius kernels, denoted by $G_r$, and the fixed points of the iterated Frobenius map, the finite groups of Lie type, denoted by $G(\mathbb{F}_q).$ 

Bendel, Nakano and Pillen initiated the investigation of the induction functor $\ind_{G(\mathbb{F}_q)}^G-$. Using filtrations and truncation, large amounts of data coming from the algebraic group and the Frobenius kernels can be transferred to the finite group. This paper looks at connections between a fundamental theorem of Chastkofsky and Jantzen and the induction functor via the cohomology and representation theory of $G$.

\end{abstract}

\maketitle

\section{Introduction}  

\subsection{} Let $G$ be a connected reductive algebraic group scheme defined over a finite (prime) field  $
{\mathbb F}_p $ of $p$ elements and set $k = \overline{{\mathbb F}}_p $. If $F=\text{Fr}:G\rightarrow G$ is the Frobenius morphism and $F^{r}$ is the $r$th iteration of this map, then 
one can consider three distinct Hopf algebras: 
\begin{itemize} 
\item[(i)] $\text{Dist(G)}$,
\item[(ii)] $\text{Dist}(G_{r})$, 
\item[(iii)] $kG({\mathbb F}_{q})$.
\end{itemize} 
Modules for the infinite-dimensional Hopf algebra $\text{Dist}(G)$, the distribution algebra of $G$, are closely related to the rational representations of $G$ (i.e., $\text{Mod}(G)$). The infinitesimal Frobenius kernel $G_{r}$ is 
the scheme theoretic kernel of $F^{r}$, and its representation theory is equivalent to modules for the finite-dimensional Hopf algebra $\text{Dist}(G_{r})$. Finally, the $F^{r}$ fixed points on $G$ yield the 
finite Chevalley group $G({\mathbb F}_{q})$ whose representations are equivalent to those for the group algebra $kG({\mathbb F}_{q})$, which is also a finite-dimensional Hopf algebra.

A given rational $G$-module $M$ may be naturally restricted to a module for the Frobenius kernel $G_{r}$ or for the finite group $ G({\mathbb F}_q) $. Historically, one has studied the module categories for these 
Hopf algebras via the the following diagram with the restriction maps and their adjoint functors. The induction functors here are exact since the quotients $G/G_{r}$ and $G/G({\mathbb F}_{q})$ are affine. There is 
no direct functorial connection between the categories of $G_{r}$-modules and $ G({\mathbb F}_{q}) $-modules. 

\begin{figure}[ht]\label{proj1} 
\setlength{\unitlength}{.5cm}
\begin{center}
\begin{picture}(13,6)
\put(5.1,5.2){$\text{Mod}(G)$}
\put(1,1.2){$\text{Mod}(G_{r})$} 
\put(9.5,1.2){$\text{Mod}(G({\Bbb F}_{q}))$} 

\put(5.2,4.9){\vector(-1,-1){2.5}}
\put(3.1,2.2){\vector(1,1){2.5}}

\put(8.1,4.7){\vector(1,-1){2.5}}
\put(10.0,2.2){\vector(-1,1){2.5}}

\put(2.5,3.8){${\text{res}}$}
\put(4.7,2.8){$\text{ind}$}

\put(9.5,3.8){$\text{res}$}
\put(7.5,2.8){$\text{ind}$}

\end{picture}
\end{center}
\end{figure}

Using the idea of lifting to $G$-structures (e.g., identifying a compatible $G$-module structure on a given $G_r$-module), Curtis, in the 1960s \cite{C60}, provided a one-to-one correspondence between the simple $G_{r}$-modules (over $
k$) and those of $G({\mathbb F}_{q})$. The correspondence is obtained by simply restricting the simple $G$-modules that hve $p^{r}$-restricted highest weights. 
These results, with work of Steinberg \cite{St63} via the twisted tensor product theorem, allow one to transfer the questions pertaining to the computation of irreducible characters for simple $G$-modules to
the calculations of characters for $G_{1}$-modules. Given the previous results in this area, a natural course of investigation should be to 
find relationships between injective modules for $G_r$ and $ G({\mathbb F}_q) $.  The ability to lift injective $G_{r}$-modules to $G$-modules is known to hold when $p\geq 2h-4$, where $h$ is the Coxeter number (cf. \cite{BNPS24}). 
The work of Chastkofsky and Jantzen shows how the lifts of these $G_{r}$-injectives decompose as injective modules upon restriction to $G({\mathbb F}_q) $. 

This lifting approach has been one of the main ideas used to relate the representation and 
cohomology theory of algebraic groups, Frobenius kernels, and finite groups of Lie type 
(in the defining characteristics) (see, e.g.,  \cite{HV73}, \cite{CPSvdK}, \cite{Jan81}, \cite{Don93}, \cite{Hum06}, and \cite{rags}).

\subsection{} In the 1990s, Bendel, Nakano and Pillen (BNP) introduced the use of infinite-dimensional modules $\ind_{G(\mathbb{F}_q)}^G M$ into the study of the representation theory of $G$, $G_{r}$ and 
$G({\mathbb F}_{q})$. At first glance, this idea seems innocuous, but because of certain filtrations that exist on the module $\ind_{G(\mathbb{F}_q)}^G k$ via the Lang map, the BNP-approach revolutionized the field by 
answering open questions and significantly improving known results. These include 
\begin{itemize} 
\item[$\bullet$] proving that self extensions for $\gfq$ vanish \cite{BNP2, BNP3},
\item[$\bullet$] locating the first non-trivial cohomology classes for $\gfq$ \cite{BNP8,BNP9}, 
\item[$\bullet$] computing low dimensional cohomology groups for simple modules over $\gfq$ \cite{UGA2,UGA1}, 
\item[$\bullet$] bounding $\text{Ext}$-groups between simple modules for finite groups of Lie type \cite{BNPPSS}, 
\item[$\bullet$] confirming the existence of mock injective modules \cite{HNS17}. 
\end{itemize} 

In 1977, Cline, Parshall, Scott and van der Kallen \cite{CPSvdK} proved a groundbreaking result about the behavior of $G$-cohomology in relation 
to the finite group cohomology for $G({\mathbb F}_{q})$. If $V$ is a finite-dimensional rational $G$-module, then for a fixed $n\geq 0$ and $s$ and $r$ sufficiently large the restriction 
map
$$\text{res}:\operatorname{H}^{n}(G,V^{(s)})\rightarrow \operatorname{H}^{n}(G({\mathbb F}_{q}),V^{(s)})$$ 
is an isomorphism. Note that the Frobenius map is an automorphism of the finite group and induces an isomorphism on the finite group cohomology:  
$$\operatorname{H}^{n}(G({\mathbb F}_{q}),V^{(s)})\cong \operatorname{H}^{n}(G({\mathbb F}_{q}),V).$$
One important consequence of this theorem is, as $r$ increases, the cohomology groups $\text{H}^{n}(G({\mathbb F}_{q}),V)$ obtain a stable or generic value 
$\text{H}^{n}_{\text{gen}}(G,V)$ (also known as the generic cohomology). 

For over 35 years, this paper stood as the defining paper on the subject of rational and generic cohomology. The 2014 work of Bendel, Nakano and Pillen 
\cite{BNP14} utilized the functorial approach via the induction functor to prove 
the existence of generic $G$-cohomology and its stability with rational $G$-cohomology groups. New results on the vanishing of $G$ and $B$-cohomology groups were presented. 
Furthermore, vanishing ranges for the associated finite group cohomology of $G({\mathbb F}_{q})$ were  established which generalize earlier work of Hiller, in addition to (uniform) stability ranges for generic cohomology which significantly improve on seminal work of Cline, Parshall, Scott and van der Kallen. The results in \cite{CPSvdK} have been used to make computations for $G({\mathbb F}_{q})$-cohomology. However, the BNP-machinery lends itself better for this purpose because often no twisting by the Frobenius is required. 

\subsection{} In 1981, Chastkofsky and Jantzen, independently, found a formula that describes the decomposition of the Brauer character of a principal indecomposable $G_r$-module into principal indecomposable modules for $G(\mathbb{F}_q).$ The multiplicities are given in terms of data coming from the algebraic group $G$. The theorem has found many applications for the finite group $G(\mathbb{F}_q)$, including character data, cohomology, Cartan invariants and the decomposition of Deligne-Lusztig characters.

The main goal of this paper is to apply the induction functor $\text{ind}_{G({\mathbb F}_q)}^{G} -$ to provide a new proof of the relationship between projective indecomposable modules of Chastkofsky \cite{Cha81} and Jantzen \cite{Jan81} (see Theorem \ref{T:ChJan}).  The key ideas for the proof are developed in Section~\ref{S:Prior}.  We further observe that this decomposition result parallels a decomposition result for extensions over $\gfq$ (see Section~\ref{S:Ext-theorems}). Lastly,  we  provide a description of the functor applied to certain $G({\mathbb F}_{q})$-modules.

\subsection{Acknowledgements} This paper was initiated in November 2025, when the third author presented a talk in honor of Leonard Chastkofsky's retirement in the University of Georgia Algebra Seminar. 
The second author would like to thank James Zhang and the other organizers for the efforts in hosting a wonderful 5-day conference 
in Seattle in December 2025.

\section{Preliminaries}

\subsection{Notation}\label{S:notation} Throughout this paper, $k$ is an algebraically closed field of characteristic $p>0$.  We will follow for the most part 
the standard conventions in \cite{rags}. Let
\vskip .25cm 
\begin{itemize}
\item[(1)] $G$ be a connected semisimple algebraic group scheme defined over ${\mathbb F}_{p}$.
\item[(2)] $T$ be a fixed split maximal torus in $G$. 
\item[(3)] $\Phi$ be the root system associated to $(G,T)$. 
\item[(4)] $\Phi^{\pm}$ be the set of positive (resp. negative) roots. 
\item[(5)] $\Delta=\{\alpha_1,\dots,\alpha_{l}\}$ be the set of simple roots determined by $\Phi^+$. 
\item[(6)] $B$ be the Borel subgroup given by the set of negative roots, $U$ be the unipotent radical of $B$. 
\item[(7)] $W$ be the Weyl group associated with $\Phi$. 
\item[(8)]  $w_0$ denote the longest word of $W$. 
\item[(9)] $\rho$ be the half-sum of positive roots (which is also the sum of the fundamental weights). 
\item[(10)] $\alpha_0$ be the highest short root, with associated coroot $\alpha_0^{\vee}$.
\item[(11)] $h$ denote the Coxeter number for the root system associated to $G$, i.e., $h = \langle\rho,\alpha_0^{\vee}\rangle + 1$.
\item[(12)] $X:=X(T)$ be the integral weight lattice spanned by the fundamental weights $\{\omega_1,\dots,\omega_l\}$. 
\item[(13)] $X_{+}=X(T)_{+}$ denote the dominant weights for $G$. 
\item[(14)] $\leq$ be the order relation defined on $X$ via $\mu \leq \la$ if and only if  $\la - \mu = \sum_{\al \in \Delta}n_{\al}\al$ for $n_{\al} \in {\mathbb Z}_{\geq 0}$.
\item[(15)]  $X_{r}$ be the $p^{r}$-restricted weights. 
\end{itemize} 
\vskip .25cm 
\noindent 
Given a dominant weight $\nu$, define its dual weight by $\nu^* := -w_0\nu$. Note that duality is a bijection on $X_{+}$.  As such, when taking a sum over {\em all} dominant weights, weights may be exchanged for their duals; a fact that will be used at multiple points in the arguments below. The following modules naturally arise in the representation theory of $G$. For $\lambda\in X_{+}$, there are four fundamental families of finite-dimensional highest weight rational $G$-modules: 
\vskip .15cm 
$\bullet$ $L(\lambda)$ (simple), 
\vskip .15cm
$\bullet$ $\nabla(\lambda)=\text{ind}_{B}^{G}\lambda$ (costandard/induced), 
\vskip .15cm 
$\bullet$
$\Delta(\lambda)$ (standard/Weyl), 
\vskip .15cm 
$\bullet$ 
$T(\lambda)$ (indecomposable tilting).
\vskip .15cm\noindent
The Weyl module can be realized as $\Delta(\la) := \nabla(\la^*)^*$ (the linear dual of  $\nabla(\la^*)$).  There is also a family of infinite-dimensional rational $G$-modules: 
\vskip .15cm 
$\bullet$ $I(\lambda)$ injective hull of $L(\lambda)$.  
\vskip .25cm 
By $F^r$ we denote the $r$th iteration of the standard Frobenius morphism on $G$. The finite field with $q=p^r$ elements is denoted by $\Fq.$ Attached to $G$ are two families  of group schemes, the Frobenius kernels, denoted by $G_r$, and the fixed points of $F^r$, the finite groups of Lie type, denoted by $\gfq$. The simple $G_{r}$- and $\gfq$-modules are in bijective correspondence with $X_{r}$. They are realized as restrictions of simple $G$-modules with highest weights in 
$X_{r}$. For $G_r$ and $\gfq$, one has the following families of modules: 
\vskip .15cm 
$\bullet$ $L(\lambda)$ simple module for $G_{r}$ and $\gfq$,  $\lambda\in X_{r}$, 
\vskip .15cm
$\bullet$  $\widehat{Q}_r(\lambda)$ injective hull of $L(\lambda)$ as a $G_{r}T$-module,
\vskip .15cm 
$\bullet$ $U_{r}(\lambda)$ injective hull of $L(\lambda)$ as a $\gfq$-module.
\vskip .25cm 
\noindent 
Let  $\text{St}_r = L((p^r-1)\rho)$ be the $r$th Steinberg module which is a simple $G$-module and 
also projective/injective (and still simple) when restricted to $G_{r}$ and $\gfq$. 

Let $\mathbb{Z}[X]$ denote the group ring on the weight lattice $X$ and by $\mathbb{Z}[X]^W$ the subring of all $W$-invariant characters. Note that both the set  
\begin{equation}
\{\chi(\la) = \ch \nabla(\la)| \ \la \in X_{+}\}
\end{equation} and 
\begin{equation} 
\{\chi_p(\la) = \ch L(\la)| \ \la \in X_{+}\}
\end{equation} 
form  bases of $\mathbb{Z}[X]^W$. 
For any $\chi \in \mathbb{Z}[X]^W$, denote by $[\chi: \chi(\la)]_G$ the coefficient corresponding to $\chi(\la)$ when $\chi$ is expressed in the $\{\chi(\mu) |\ \mu \in X_{+}\}$ basis. Similarly, one can define $[\chi: \chi_p(\la)]_G$.

Given a $G$-module $V$ and a $p^r$-restricted weight $\la$, let $[V: L(\la)]_{\gfq}$ be the composition factor multiplicity of $L(\la)$ in $V$ when restricted to $\gfq.$   For any $\chi \in \mathbb{Z}[X]^W$ and $\la \in X_r$, set 
\begin{equation}
[\chi: L(\la)]_{\gfq} = \sum_{\mu \in X_{+}} [\chi: \chi_p(\mu)]_G \cdot [L(\mu):L(\la)]_{\gfq}.
\end{equation} 
For $\chi = \chi(\nu)$ for $\nu \in X_{+}$, we simply have 
\begin{equation} 
[\chi(\nu) : L(\la)]_{\gfq} = [\nabla(\nu) : L(\la)]_{\gfq} = [\Delta(\nu) : L(\la)]_{\gfq}.
\end{equation}


\subsection{The Induction Functor}\label{SS:BNPintro}
Given a $\gfq$-module $N$, set 
$$\mathcal{G}(N):= \ind_{\gfq}^G N.$$  
This $G$-module is infinite dimensional and encodes important information 
that relates extensions over $\gfq$ with extensions over $G$.  Indeed, from the fact that 
$G/G({\mathbb F}_q)$ is affine, one has the following key observation.

\begin{prop} \cite{BNP8}\label{P:Ext1} Let $M$ be rational $G$-module and $N$ a $\gfq$-module. Then, for all $n\geq 0$,
$$
\Ext^n_{\gfq}(M,N) \cong \Ext_G^n(M,\mathcal{G}(N)).
$$ 
\end{prop}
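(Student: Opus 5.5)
The plan is to derive the isomorphism from Frobenius reciprocity, combined with the exactness of $\ind_{\gfq}^G -$ coming from the affineness of $G/\gfq$. Since $\gfq$ is a closed subgroup scheme of $G$, we may apply Frobenius reciprocity in the form of \cite[I.3.4]{rags}: for any rational $G$-module $M$ and $\gfq$-module $N$ there is a natural isomorphism
$$\Hom_{\gfq}(M,N)\cong \Hom_G(M,\ind_{\gfq}^G N)=\Hom_G(M,\mathcal{G}(N)).$$
This is exactly the case $n=0$. Naturality in $N$ lets us view the statement as an isomorphism of $\delta$-functors in $N$.

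For the higher $n$, we use the standard Grothendieck spectral sequence argument (\cite[I.4.5]{rags}):
$$E_2^{i,j}=\Ext^i_G(M,R^j\ind_{\gfq}^G N)\Rightarrow \Ext^{i+j}_{\gfq}(M,N).$$
Its input is the composite-functor identity
$$\Hom_{\gfq}(M,-)=\Hom_G(M,-)\circ \ind_{\gfq}^G.$$
For this identity to give a spectral sequence we need $\ind_{\gfq}^G$ to send injective $\gfq$-modules to injective $G$-modules. That holds because $\ind$ is right adjoint to the exact restriction functor.

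The next step is to show that $R^j\ind_{\gfq}^G N=0$ for $j>0$. Since $G/\gfq$ is affine, \cite[I.5.13]{rags} says that $\ind_{\gfq}^G$ is exact. Concretely, $\ind_{\gfq}^G N=(N\otimes k[G])^{\gfq}$, and $k[G]$ is an injective $\gfq$-module because $\gfq$ is finite (in fact $k[G]$ is free over $k\gfq$). Hence taking invariants is exact, the spectral sequence collapses, and we obtain $\Ext^n_{\gfq}(M,N)\cong \Ext^n_G(M,\mathcal{G}(N))$.

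An alternative route avoids spectral sequences. Take an injective resolution $N\to I^\bullet$ over $\gfq$. Exactness of induction makes $\mathcal{G}(I^\bullet)$ a resolution of $\mathcal{G}(N)$, and each $\mathcal{G}(I^j)$ is $G$-injective. Applying $\Hom_G(M,-)$ and Frobenius reciprocity then identifies the two complexes, so their cohomologies agree.

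The main obstacle is justifying exactness of $\ind_{\gfq}^G$, which is where the affineness of $G/\gfq$ is used. Once that is in place, the rest is formal.
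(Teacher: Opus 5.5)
Your argument is correct and is the same one the paper relies on (it cites \cite{BNP8} and attributes the result to $G/\gfq$ being affine): Frobenius reciprocity handles $n=0$, and exactness of $\ind_{\gfq}^G$ kills $R^j\ind_{\gfq}^G N$ for $j>0$, so the generalized Frobenius reciprocity spectral sequence (or your equivalent comparison of injective resolutions) gives the isomorphism for all $n$. Your side claim that $k[G]$ is injective (even free) over $k\gfq$ is stated without proof, but it is not needed once you invoke \cite[I.5.13]{rags}.
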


In addition, by Generalized Frobenius Reciprocity, one obtains

\begin{prop} \cite[Proposition 2.3.1]{BNP8}\label{P:Ext2}  Let $M, N$ be rational $G$-modules. Then, for all $n\geq 0$,
$$
\Ext^n_{\gfq}(M,N) \cong \Ext_G^n(M,N\otimes\mathcal{G}(k)).
$$ 
\end{prop}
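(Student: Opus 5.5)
Proposition~\ref{P:Ext2} follows from Proposition~\ref{P:Ext1} together with the tensor identity for induction. Take $N$ to be a rational $G$-module and regard it as a $\gfq$-module by restriction. Proposition~\ref{P:Ext1} then gives
$$\Ext^n_{\gfq}(M,N)\cong \Ext^n_G(M,\ind_{\gfq}^G N).$$
So the statement reduces to a natural $G$-isomorphism
$$\ind_{\gfq}^G(N|_{\gfq})\cong N\otimes \ind_{\gfq}^G k = N\otimes\mathcal{G}(k).$$

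This isomorphism is the tensor identity for induction from a closed subgroup scheme; see \cite[I.3.6]{rags}. For a closed subgroup scheme $H\subseteq G$, a rational $G$-module $N$, and an $H$-module $V$, it states
$$\ind_H^G(N|_H\otimes V)\cong N\otimes \ind_H^G V.$$
Apply it with $H=\gfq$, viewed as a finite (reduced) closed subgroup scheme of $G$, and $V=k$. This gives the displayed isomorphism, and it is natural in $N$.

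To check it concretely, realize $\ind_H^G V$ as the $H$-equivariant maps $f\colon G\to V$, that is, those with $f(gh)=h^{-1}f(g)$. The isomorphism sends $f\in\ind_H^G(N)$ to the function $g\mapsto g\cdot f(g)$. This lands in the $H$-invariant functions $G\to N$, which is $N\otimes k[G]^H=N\otimes\ind_H^G k$. One verifies that the map intertwines the $G$-actions: the left regular action on the source goes to the diagonal action on the target. The inverse is $\phi\mapsto (g\mapsto g^{-1}\phi(g))$.

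Substituting this isomorphism into the Ext-isomorphism above yields the proposition. No step is a real obstacle. The only point needing care is that Proposition~\ref{P:Ext1} already relies on the exactness of $\ind_{\gfq}^G$, which holds because $G/\gfq$ is affine. This exactness is what makes generalized Frobenius reciprocity hold in every degree $n$, not only for $n=0$.
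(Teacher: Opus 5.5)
Your proof is correct and follows the paper's route. The paper obtains the statement from generalized Frobenius reciprocity, i.e.\ Proposition~\ref{P:Ext1}, valid in every degree because $\ind_{\gfq}^G$ is exact, combined with the tensor identity $\ind_{\gfq}^G(N)\cong N\otimes\mathcal{G}(k)$ of \cite[I.3.6]{rags}, exactly as you do, and your explicit verification of that identity via $f\mapsto (g\mapsto g\cdot f(g))$ is also correct.
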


Finally, the $G$-module $\mathcal{G}(k)$ has a special property in that it admits a filtration with sections that arise naturally from the algebra $k[G/\gfq]$. 

\begin{prop} \cite[Proposition 2.4.1] {BNP8}\label{P:Filtration} The module $\mathcal{G}(k)$ 
has a filtration with factors of the form $\nabla(\la)\otimes \nabla(\la^*)^{(r)}$ 
with multiplicity one for each $\la \in X_{+}$.
\end{prop}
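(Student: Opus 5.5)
The plan is to realize $\mathcal{G}(k)=\ind_{\gfq}^G k$ as the coordinate algebra $k[G/\gfq]$ and to filter it using the Lang map. The map $L: G\to G$, $g\mapsto g^{-1}F^r(g)$, is surjective. Its fibers are the cosets $\gfq g$, so $G/\gfq\cong G$ as varieties. Under this identification, left translation by $G$ on $G/\gfq$ becomes the twisted conjugation action $x\cdot y = F^r(x)\,y\,x^{-1}$. In other words, $\mathcal{G}(k)\cong k[G]$, where $G$ acts by combining the right regular action with a Frobenius-twisted left regular action.

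The key steps are as follows.
\begin{enumerate}
\item Write $k[G]$ as a $G\times G$-bimodule under left and right translation. By the standard result (cf.\ \cite[II.4.20]{rags}), $k[G]$ has a good filtration as a $G\times G$-module. Its factors are $\nabla(\la)\otimes\nabla(\la^*)$, one for each $\la\in X_+$. The filtration is indexed by a chosen total order refining $\le$ on $X_+$, and the $\la$-th piece consists of the matrix coefficients whose weights are bounded by $\la$.
\item Pull this bimodule filtration back along the diagonal-type homomorphism $G\to G\times G$, $x\mapsto (x, F^r(x))$. This matches the twisted action coming from the Lang map. The restriction of each section $\nabla(\la)\otimes\nabla(\la^*)$ then becomes $\nabla(\la)\otimes\nabla(\la^*)^{(r)}$ as a $G$-module, or the same with the roles of $\la$ and $\la^*$ exchanged. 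Since duality is a bijection on $X_+$, we may relabel so that the sections appear as in the statement.
\item Check that the pieces of the filtration are preserved by the twisted action. This holds because they were already $G\times G$-stable. Exhausting over the ordered $X_+$ then gives a filtration of all of $\mathcal{G}(k)$ in which each section occurs with multiplicity one.
\end{enumerate}

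The main obstacle is the identification in the first paragraph. We must check that the comorphism of the Lang map is an isomorphism $k[G]\to k[G]^{\gfq}=k[G/\gfq]$ of $G$-modules for the twisted action, and this has to hold scheme-theoretically, not merely on $k$-points. I would argue this way. The Lang map is finite, étale (its differential is $-1$ on the Lie algebra) and surjective. Its fibers are $\gfq$-torsors. Hence it is a quotient morphism, and the comorphism identifies $k[G]$ with the $\gfq$-invariants of $k[G]$.

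Equivariance is a direct computation. Take $y=g^{-1}F^r(g)$. Then
\[
L(gx^{-1}) = x\,g^{-1}F^r(g)\,F^r(x)^{-1} = x\,y\,F^r(x)^{-1}.
\]
The exact left/right convention determines whether the twist lands on $\nabla(\la)$ or on $\nabla(\la^*)$. Either choice gives the statement after relabelling $\la\leftrightarrow\la^*$.

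Once this identification is in place, the remaining steps are routine transport of the known $G\times G$ good filtration on $k[G]$.
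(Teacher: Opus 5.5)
Your proposal is correct. It is essentially the standard argument behind the cited result \cite[Proposition 2.4.1]{BNP8}; the present paper only quotes that result and gives no proof. The argument identifies $\mathcal{G}(k)=k[G/\gfq]$ with $k[G]$ via the Lang map, then restricts the $G\times G$ good filtration of $k[G]$ from \cite[II.4.20]{rags} along $x\mapsto (x,F^r(x))$.

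The only slip is cosmetic. For $L(g)=g^{-1}F^r(g)$ the fibres are right cosets, and the transported action is $x\cdot y=x\,y\,F^r(x)^{-1}$, as your own computation shows. The formula $F^r(x)\,y\,x^{-1}$ belongs instead to $g\mapsto F^r(g)g^{-1}$ on left cosets. Either convention gives the statement after relabelling $\la\leftrightarrow\la^*$, as you note.
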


\section{Prior results of Chastkofsky and Jantzen}\label{S:Prior}

\subsection{Chastkofsky's Lemma} The propositions in the following two sections are special cases of Lemma 8 in \cite{Cha81}. See also \cite[Satz 1.5]{Jan81}.
Using the filtration of Proposition \ref{P:Filtration}, one can first show the following:

\begin{prop}\label{P:chiSt1}   
Let $\chi \in \mathbb{Z}[X]^W.$ Then 
$$[\chi: \operatorname{St}_{r}]_{\gfq} = \sum_{\nu \in X_{+}}[\chi \cdot \chi(\nu): \chi((p^r-1)\rho) \cdot \chi(\nu)^{(r)}]_G.$$
\end{prop}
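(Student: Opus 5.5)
Both sides of the identity are $\mathbb{Z}$-linear in $\chi$, so I would first reduce to the case $\chi = \ch V$ for a finite-dimensional rational $G$-module $V$. A convenient choice is $V=\nabla(\mu)$: the characters $\chi(\mu)$, $\mu\in X_+$, form a basis of $\mathbb{Z}[X]^W$, and in any expression $[\chi\cdot\chi(\nu):\chi((p^r-1)\rho)\chi(\nu)^{(r)}]_G$ only finitely many $\nu$ contribute for fixed $\chi$, by a weight comparison. I would check this finiteness separately, since it is what makes the right-hand side a finite sum.

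For $\chi=\ch V$, the left side is the composition multiplicity of $\operatorname{St}_r$ in $V|_{\gfq}$. Since $\operatorname{St}_r$ is simple, projective and injective over $\gfq$, this equals $\dim\Hom_{\gfq}(\operatorname{St}_r,V)$, and $\Ext^n_{\gfq}(\operatorname{St}_r,V)=0$ for $n>0$. By Proposition~\ref{P:Ext2},
$$\dim\Hom_{\gfq}(\operatorname{St}_r,V)=\dim\Hom_G(\operatorname{St}_r,V\otimes\mathcal{G}(k)),$$
and also $\Ext^n_G(\operatorname{St}_r,V\otimes\mathcal{G}(k))=0$ for $n>0$. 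I would then tensor the filtration of Proposition~\ref{P:Filtration} with $V$. This gives a filtration of $V\otimes\mathcal{G}(k)$ with sections $V\otimes\nabla(\nu)\otimes\nabla(\nu^*)^{(r)}$, one for each $\nu\in X_+$.

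Next I compute $\Hom_G(\operatorname{St}_r,-)$ and its higher Ext groups on each section. We have $\Hom_G(\operatorname{St}_r, M\otimes N^{(r)})\cong \Hom_{G}(\operatorname{St}_r\otimes N^{*(r)},M)$. Computing through $G_r$ and using that $\operatorname{St}_r$ is $G_r$-injective gives
$$\Ext^n_G(\operatorname{St}_r,M\otimes N^{(r)})\cong \Ext^n_G\big(N^*,\Hom_{G_r}(\operatorname{St}_r,M)^{(-r)}\big).$$
By Euler-characteristic bookkeeping, $\sum_n(-1)^n\dim\Ext^n$ is additive along the filtration. If the sum over $\nu$ is effectively finite in each degree, we get
$$\dim\Hom_{\gfq}(\operatorname{St}_r,V)=\sum_{\nu}\sum_n(-1)^n\dim\Ext^n_G\big(\operatorname{St}_r,V\otimes\nabla(\nu)\otimes\nabla(\nu^*)^{(r)}\big).$$
To identify each summand with a character coefficient, I would use the following fact. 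For a finite-dimensional $G$-module $M$ and $\mu\in X_+$, the Euler characteristic $\sum_n(-1)^n\dim\Ext^n_G(\Delta(\mu),M)$ equals $[\ch M:\chi(\mu)]_G$. This holds because $\Ext^n_G(\Delta(\mu),\nabla(\la))=\delta_{n0}\delta_{\mu\la}$ and both sides are additive.

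The remaining step is to rewrite each summand as $[\chi\cdot\chi(\nu):\chi((p^r-1)\rho)\chi(\nu)^{(r)}]_G$. Here one uses that $\operatorname{St}_r\otimes\Delta(\nu)^{(r)}\cong\Delta((p^r-1)\rho+p^r\nu)$, whose character is $\chi((p^r-1)\rho)\chi(\nu)^{(r)}$, so the relevant Hom/Ext groups pick out exactly the coefficient of that product character. A cleaner route is to avoid the twist $\nu^*$: for $\chi=\ch V$, the multiplicity of $\operatorname{St}_r\otimes\nabla(\nu)^{(r)}$ as a good-filtration section is $[\chi\cdot\chi(\nu):\chi((p^r-1)\rho)\chi(\nu)^{(r)}]$ in a suitable sense. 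Pairing the section indexed by $\nu$ with its dual via $\nu\leftrightarrow\nu^*$ (a bijection on $X_+$) matches the indexing in the statement.

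I expect the main obstacle to be this last identification together with the Euler-characteristic/finiteness argument. The filtration is infinite, and $\Hom_G$ is not exact on it, so one must justify that the higher Ext terms vanish. This comes from $\operatorname{St}_r$ being $G_r$-injective, combined with Kempf vanishing and good filtrations for $\nabla(\nu)\otimes V$-type modules after reduction to $V=\nabla(\mu)$. It also requires that only finitely many $\nu$ contribute. I would first establish this for $V=\nabla(\mu)$, where everything has a good filtration and the Ext-vanishing is cleanest, and then extend by linearity.
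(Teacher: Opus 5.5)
Your argument is correct. It is the mirror image of the paper's proof, with $\str$ moved from the target slot to the source slot.

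\textbf{The paper's route.} The paper uses injectivity of $\str$ to write $[\chi(\lambda):\str]_{\gfq}=\dim\Hom_{\gfq}(\Delta(\lambda),\str)=\dim\Hom_G(\Delta(\lambda),\str\otimes\mathcal{G}(k))$. It then uses that the target $\str\otimes\mathcal{G}(k)$ itself has a good filtration, with sections $\str\otimes\nabla(\nu)\otimes\nabla(\nu^*)^{(r)}\cong\nabla(\nu)\otimes\nabla((p^r-1)\rho+p^r\nu^*)$ (the $\nabla$-form of Andersen--Haboush). So additivity of $\dim\Hom_G(\Delta(\lambda),-)$ is automatic. The cost is a reindexing $\nu\mapsto\nu^*$ and a conversion of a Weyl-filtration multiplicity in $\Delta(\lambda)\otimes\Delta(\nu)$ into a good-filtration multiplicity in $\nabla(\lambda)\otimes\nabla(\nu)$.

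\textbf{Your route.} You use projectivity of $\str$ and Proposition~\ref{P:Ext2}. Your sections $\nabla(\mu)\otimes\nabla(\nu)\otimes\nabla(\nu^*)^{(r)}$ need not have good filtrations, so you must pass through the adjunction. With the $\Delta$-form $\str\otimes\Delta(\nu)^{(r)}\cong\Delta((p^r-1)\rho+p^r\nu)$ you get
$$\Ext^n_G\big(\str,\nabla(\mu)\otimes\nabla(\nu)\otimes\nabla(\nu^*)^{(r)}\big)\cong\Ext^n_G\big(\Delta((p^r-1)\rho+p^r\nu),\nabla(\mu)\otimes\nabla(\nu)\big).$$
This vanishes for $n>0$, since $\nabla(\mu)\otimes\nabla(\nu)$ has a good filtration. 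In degree $0$ it is exactly the coefficient $[\chi(\mu)\cdot\chi(\nu):\chi((p^r-1)\rho)\cdot\chi(\nu)^{(r)}]_G$, with the same $\nu$.

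\textbf{Simplifications.} The step you flagged as the main obstacle is direct, and several pieces of your plan can be dropped:
\begin{itemize}
\item No $\nu\leftrightarrow\nu^*$ pairing is needed.
\item The Euler-characteristic bookkeeping and the $G_r$ computation are unnecessary; they only add work.
\item Additivity along the infinite filtration is elementary. Any homomorphism from the finite-dimensional module $\str$ lands in a finite stage of the exhaustive filtration. Also, $\Ext^1_G(\str,-)$ vanishes on every section. Together these make $\dim\Hom_G(\str,\nabla(\mu)\otimes\mathcal{G}(k))$ the sum of the section contributions.
\end{itemize}

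\textbf{Comparison.} The paper's version buys a target with a good filtration outright. Yours buys a term-by-term identification with no reindexing and no appeal to the Weyl/good filtration symmetry.
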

\begin{proof}  The statement makes sense because a theorem by Andersen and Haboush says that 
$\str \otimes \nabla(\nu)^{(r)} \cong \nabla((p^r-1)\rho + p^r \nu)$ (cf. \cite[Proposition II.3.19]{rags}).   In particular, $\chi((p^r-1)\rho)\cdot \chi(\nu)^{(r)} = \chi((p^r-1)\rho + p^r\nu)$.     
Also note that, for a fixed character $\chi$, only finitely many weights $\nu$ will contribute to the summation on the right.

It suffices to prove the statement for elements of the basis $\{\chi(\la) |\ \la \in X_+\}.$
Since $\str$ is injective and projective as a $\gfq$-module, one obtains
\begin{eqnarray*}
[\chi(\la):\str]_{\gfq} &=& [\Delta(\la) : \str]_{\gfq}\\
&=& \dim \Hom_{\gfq}({\Delta(\la),\str})\\
&=&\dim  \Hom_G(\Delta(\la), \mathcal{G}(\str)  )\\
&=&\dim  \Hom_G(\Delta(\la), \str \otimes \mathcal{G}(k) ).
\end{eqnarray*}
Now $\mathcal{G}(\str) \cong \str \otimes \mathcal{G}(k)$ is an injective $G$-module with a good filtration. Moreover, by Proposition \ref{P:Filtration}, $\mathcal{G}(k)$ has a filtration with factors $\nabla(\nu) \otimes \nabla(\nu^*)^{(r)}.$ Note that 
$$\str \otimes \nabla(\nu) \otimes \nabla(\nu^*)^{(r)} \cong \nabla(\nu) \otimes \nabla((p^r-1)\rho + p^r{\nu}^*)$$
also has a good filtration. 

Recall that given a $G$-module $M$ with a good filtration, then $\dim \Hom_G(\Delta(\la),M)$ counts the multiplicity of $\nabla(\la)$ in such a filtration. One concludes
\begin{eqnarray*}
[\chi(\la):\str]_{\gfq} 
&=&\dim  \Hom_G(\Delta(\la), \str \otimes \mathcal{G}(k) )\\
&=& \sum_{\nu \in X_{+}} \dim \Hom_G(\Delta(\la),  \nabla(\nu) \otimes \nabla((p^r-1)\rho + p^r\nu^*))\\
&=& \sum_{\nu \in X_{+}} \dim \Hom_G(\Delta(\la)\otimes \Delta(\nu^*), \nabla((p^r-1)\rho + p^r\nu^*))\\
&=& \sum_{\nu \in X_{+}} \dim \Hom_G(\Delta(\la) \otimes \Delta(\nu),  \nabla((p^r-1)\rho + p^r\nu))\\
&=& \sum_{\nu \in X_{+}} \dim \Hom_G(\Delta((p^r-1)\rho + p^r\nu),\nabla(\la) \otimes \nabla(\nu)).
\end{eqnarray*}
Observe that  $\dim \Hom_G(\Delta(\la) \otimes \Delta(\nu),  \nabla((p^r-1)\rho + p^r\nu)$ counts the multiplicity of $\Delta ((p^r-1)\rho + p^r\nu)$ in a Weyl filtration of $\Delta(\la) \otimes \Delta(\nu)$, which  equals the multiplicity of $\nabla((p^r-1)\rho + p^r\nu)$ in a $\nabla$-filtration of  $\nabla(\la) \otimes \nabla(\nu).$ This justifies the last equality. The latter is equal to
 $[\chi(\la) \cdot \chi(\nu):\chi(p^r-1)\rho + p^r \nu)]_G.$ The claim now follows.

\end{proof}

\subsection{} Next we reformulate the formula in Proposition~\ref{P:chiSt1} by performing a ``change of basis".

\begin{prop}\label{P:chiSt2}
Let $\chi \in \mathbb{Z}[X]^W.$ Then 
$$[\chi: \operatorname{St}_{r}]_{\gfq} = \sum_{\nu \in X_{+}}[\chi \cdot \chi_p(\nu): \chi((p^r-1)\rho) \cdot \chi_p(\nu)^{(r)}]_G.$$
\end{prop}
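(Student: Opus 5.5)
Starting from Proposition~\ref{P:chiSt1}, I will rewrite its right-hand side as a bilinear pairing and then change basis in the summation variable $\nu$. Fix $\chi \in \mathbb{Z}[X]^W$ and define, for $\psi \in \mathbb{Z}[X]^W$,
$$\Phi(\psi) := [\chi \cdot \psi : \chi((p^r-1)\rho)\cdot \psi^{(r)}]_G .$$
This is not linear in $\psi$, since $\psi$ appears twice. So I will instead work with the bilinear form
$$\langle \phi, \psi\rangle := [\chi\cdot\phi : \chi((p^r-1)\rho)\cdot\psi^{(r)}]_G ,$$
which is linear in each of $\phi$ and $\psi$. Linearity in $\psi$ holds because $[-:\chi(\mu)]_G$ is linear and $\psi\mapsto \chi((p^r-1)\rho)\psi^{(r)}$ is linear. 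In particular, for $\psi=\chi(\mu)$, the Andersen--Haboush identity gives $\chi((p^r-1)\rho)\chi(\mu)^{(r)}=\chi((p^r-1)\rho+p^r\mu)$, a basis element. For general $\psi=\sum_\mu a_\mu\chi(\mu)$, the expression $[\cdot:\psi\text{-twisted}]$ is then read as $\sum_\mu a_\mu[\cdot:\chi((p^r-1)\rho+p^r\mu)]_G$, which is how the statement should be interpreted.

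With this notation, Proposition~\ref{P:chiSt1} says $[\chi:\str]_{\gfq}=\sum_{\nu}\langle \chi(\nu),\chi(\nu)\rangle$, and the goal is $\sum_\nu\langle\chi_p(\nu),\chi_p(\nu)\rangle$. Write $\chi_p(\nu)=\sum_\mu a_{\nu\mu}\chi(\mu)$ with $A=(a_{\nu\mu})$ unitriangular. Then
$$\sum_\nu\langle\chi_p(\nu),\chi_p(\nu)\rangle=\sum_{\mu,\mu'}\Big(\sum_\nu a_{\nu\mu}a_{\nu\mu'}\Big)\langle\chi(\mu),\chi(\mu')\rangle .$$
This reduces matters to a ``trace'' identity: the sum $\sum_\nu \langle e_\nu,e_\nu\rangle$ is basis-independent provided the change of basis is orthogonal, which $A$ certainly is not. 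So a naive approach fails, and the pairing must have extra structure.

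The key extra structure is that $\langle\chi(\mu),\chi(\mu')\rangle$ equals the coefficient of $\chi((p^r-1)\rho+p^r\mu')$ in $\chi\cdot\chi(\mu)$. I will use the contravariant form on $\mathbb{Z}[X]^W$ for which $\{\chi(\mu)\}$ is dual to a second basis. The natural choice is $(\chi(\mu),\chi(\mu'^*))=\delta$, realized by $\dim\Hom_G(\Delta,\nabla)$ via Euler characteristics. Reinterpreting as in the proof of Proposition~\ref{P:chiSt1}, one gets $\langle\chi(\mu),\chi(\mu')\rangle=\sum_i(-1)^i\dim\Ext^i_G(\Delta((p^r-1)\rho+p^r\mu'),\,\chi\otimes\nabla(\mu))$, extended to virtual modules. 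By the Andersen--Haboush isomorphism and the Lyndon--Hochschild--Serre spectral sequence for $G_r\trianglelefteq G$, using that $\str$ is $G_r$-projective, this becomes
$$\langle\phi,\psi\rangle=\big(\text{Euler char of } \Hom_{G_r}(\str,\chi\cdot\phi)^{(-r)}\big)\ \text{paired with}\ \psi^*,$$
that is, $\langle\phi,\psi\rangle=e\big(\theta(\chi\phi)\cdot\psi^{*}\big)$. Here $\theta(M)=\Hom_{G_r}(\str,M)^{(-r)}$ is exact and additive on characters, and $e(\cdot)$ is the multiplicity of $\chi(0)$, i.e.\ the Euler characteristic of $G$-invariants. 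The crucial step is then the identity
$$\theta(\chi\cdot\phi)\cdot\psi^*\ =\ \theta(\chi\cdot\phi\cdot\psi^{*(r)}),$$
which is the tensor identity for $G_r$ (since $\psi^{*(r)}$ is $G_r$-trivial). It gives $\langle\phi,\psi\rangle=e\,\theta(\chi\cdot\phi\cdot\psi^{*(r)})$.

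The sum $\sum_\nu \phi_\nu\cdot\psi_\nu^{*(r)}$, where $\{\phi_\nu\}$ is a basis and $\{\psi_\nu^*\}$ its dual basis under the pairing $(a,b)=e(a\cdot b)$, is the canonical ``Casimir'' element. It is independent of the basis once $\psi_\nu$ is taken dual to $\phi_\nu$. Both $\{\chi(\nu),\chi(\nu)\}$ and $\{\chi_p(\nu),\chi_p(\nu)\}$ must therefore be shown to be dual pairs for the form $(a,b)\mapsto e(a\cdot b^*)$. For $\chi(\nu)$ this is $\sum_i(-1)^i\dim\Ext^i_G(\Delta(\nu),\nabla(\mu))=\delta_{\nu\mu}$. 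For $\chi_p$ this is the main obstacle: $e(\chi_p(\nu)\chi_p(\mu)^*)=\sum_i(-1)^i\dim\Ext^i_G(L(\mu),L(\nu))$ is \emph{not} $\delta$. My plan there is to replace the relevant sum by a finite truncation: only finitely many $\nu$ contribute, by the finiteness remark in the proof of Proposition~\ref{P:chiSt1}. I would then try to show that the $\chi_p$-Casimir element differs from the $\chi$-one by terms killed by $e\circ\theta(\chi\cdot-)$, using linkage and the fact that $\theta$ only detects the $\str$-block. If that fails, I would fall back to checking directly that $A^{T}A$ acts as the identity on the pairing matrix $\langle\chi(\mu),\chi(\mu')\rangle$ restricted to the relevant linkage classes.
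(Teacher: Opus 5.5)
Your argument has a genuine gap, and it comes from how you read the right-hand side. In the paper, $\chi((p^r-1)\rho)\cdot\chi_p(\nu)^{(r)}=\chi_p((p^r-1)\rho+p^r\nu)$ by Steinberg's tensor product theorem. The bracket $[\,\cdot\,:\chi_p(\sigma)]_G$ is the coordinate of $\chi_p(\sigma)$ in the $\chi_p$-basis, i.e.\ a composition multiplicity, computed on modules by $\dim\Hom_G(-,I(\sigma))$. A coordinate functional is not linear in the basis vector it is attached to, so it is not $\sum_\ga a_{\nu\ga}[\,\cdot\,:\chi((p^r-1)\rho+p^r\ga)]_G$ as you propose. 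Set $b_{\ga\nu}=[\chi(\ga):\chi_p(\nu)]_G=(I(\nu):\nabla(\ga))$. Then $I((p^r-1)\rho+p^r\nu)\cong\str\otimes I(\nu)^{(r)}$ has a good filtration with factors $\nabla((p^r-1)\rho+p^r\ga)$ of multiplicity $b_{\ga\nu}$, which gives
\[
[\psi:\chi_p((p^r-1)\rho+p^r\nu)]_G=\sum_{\ga\in X_+} b_{\ga\nu}\,[\psi:\chi((p^r-1)\rho+p^r\ga)]_G .
\]
So the second slot transforms by $B=(b_{\ga\nu})=A^{-1}$, not by $A$.

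With your bilinear reading the identity is false. Take $G=SL_2$, $p=2$, $r=1$, $\chi=\chi(1)=\ch\str$, so the left side is $1$. Here $\chi_p(4)=\chi(4)-\chi(2)$ and $\chi(1)\chi_p(4)=\chi(5)-\chi(1)=\chi_p(5)$. The paper's $\nu=4$ term is $[\chi_p(5):\chi_p(9)]_G=0$; in fact every term with $\nu\neq 0$ vanishes. Your $\nu=4$ term is $[\chi(5)-\chi(1):\chi(9)]_G-[\chi(5)-\chi(1):\chi(5)]_G=-1$. Likewise $\nu=12$ contributes $-2$ in your reading, and no term with $\nu\neq0$ is positive, so your right-hand side is at most $-2$. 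Hence your ``main obstacle'', that $\{\chi_p(\nu)\}$ is not self-dual for $(a,b)\mapsto e(a\cdot b^*)$, cannot be removed by truncation, linkage, or an $A^{T}A$ computation: the identity you would be proving there is not true.

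Your Casimir intuition is right, but with the wrong dual basis. Since $\{\chi(\nu)\}$ is orthonormal for $e(a\cdot b^*)$, the dual of $\{\chi_p(\nu)\}$ is $\{\sum_\ga b_{\ga\nu}\chi(\ga)\}$, formally $\{\ch I(\nu)\}$. Pairing against $\ch I(\sigma)$ is exactly what the $\chi_p$-coordinate does. Once the bracket is read correctly, you need no Euler characteristics, no $\theta$, and no Lyndon--Hochschild--Serre argument; what remains is the paper's proof. For $\chi=\chi(\la)$:
\begin{enumerate}
\item Expand $\chi_p(\nu)=\sum_\ga a_{\nu\ga}\chi(\ga)$ in the first slot.
\item Write the second slot as $\dim\Hom_G(\Delta(\la)\otimes\Delta(\ga),\str\otimes I(\nu)^{(r)})$, and expand $I(\nu)$ along its good filtration, with coefficients $b_{\ga'\nu}$.
\item Collapse using $\sum_\nu b_{\ga'\nu}a_{\nu\ga}=\delta_{\ga\ga'}$; this leaves exactly the right-hand side of Proposition~\ref{P:chiSt1}.
\end{enumerate}
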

\begin{proof} First note the statement makes sense because  
$\str \otimes L(\nu)^{(r)} \cong L((p^r-1)\rho + p^r \nu).$ 

We introduce integers $a_{\nu\ga}=[\chi_p(\nu):\chi(\ga)]_G$ and $b_{\ga\nu}=[\chi(\ga):\chi_p(\nu)]_G$. Observe that $b_{\ga\nu}$ equals the multiplicity of  $\nabla(\ga)$ in a good filtration of the injective hull $I(\nu)$ of the simple $G$-module $L(\nu)$ 
via reciprocity (cf. \cite[II 4.18 Proposition]{rags}).  Consider the special case $\chi = \chi(\la)$ and see that

\begin{align*}
\sum_{\nu \in X(T)_{+}}[\chi (\la) \cdot \chi_p(\nu) &: \chi((p^r-1)\rho) \cdot \chi_p(\nu)^{(r)}]_G \\
&= \sum_{\nu \in X(T)_{+}} \sum_{\ga\in X(T)_{+}} a_{\nu\ga} [\chi(\la) \cdot \chi(\ga): \chi((p^r-1)\rho) \cdot \chi_p(\nu)^{(r)}]_G\\
&= \sum_{\nu \in X(T)_{+}} \sum_{\ga\in X(T)_{+}} a_{\nu\ga} \dim \Hom_G(\Delta(\la) \otimes  \Delta (\ga), I((p^r-1)\rho+ p^r\nu))\\
&= \sum_{\nu \in X(T)_{+}} \sum_{\ga\in X(T)_{+}} a_{\nu\ga} \dim \Hom_G(\Delta(\la) \otimes  \Delta (\ga), \str \otimes I(\nu)^{(r)})\\
&= \sum_{\nu \in X(T)_{+}} \sum_{\ga\in X(T)_{+}} b_{\ga\nu}a_{\nu\ga} \dim \Hom_G(\Delta(\la) \otimes  \Delta (\ga), \str \otimes \nabla(\ga)^{(r)})\\
&= \sum_{\ga \in X(T)_{+}} \dim \Hom_G(\Delta(\la) \otimes  \Delta (\ga), \str \otimes \nabla(\ga)^{(r)}),\\
\end{align*}
where the last equality follows from the facts that (i) $a_{\nu\ga}=0$ unless $\ga \leq \nu$, (ii) $b_{\ga\nu} \neq 0$ implies that $\nu \leq \ga$, and (iii) $a_{\nu \nu} = b_{\nu\nu}=1$.  Reidentifying this last expression in terms of multiplicities and applying Proposition \ref{P:chiSt1} gives the claim:

\begin{align*}
\sum_{\nu \in X(T)_{+}}[\chi (\la) \cdot \chi_p(\nu) &: \chi((p^r-1)\rho) \cdot \chi_p(\nu)^{(r)}]_G \\
& = \sum_{\ga \in X(T)_{+}} \dim \Hom_G(\Delta(\la) \otimes  \Delta (\ga), \str \otimes \nabla(\ga)^{(r)})\\
&= \sum_{\ga \in X(T)_{+}}[\chi (\la)\cdot \chi(\ga): \chi((p^r-1)\rho) \cdot \chi(\ga)^{(r)}]_G \\
&= [\chi(\la): \str]_{\gfq}.
\end{align*}

\end{proof}

\subsection{Jantzen's Lemma}
Let $\la \in X_r.$ The $G_r$-module $\widehat{Q}_r(\la)$ appears as a $\gr$-summand in the $G$-module tensor product $\str \otimes L((p^r-1)\rho+w_0\la)$. In general, this summand does not have a $G$-module structure, but it has the character of a $G$-module. 
Moreover, there exists a character $q_r(\la) \in \mathbb{Z}[X]^W$ with 
$$\ch \widehat{Q}_r(\la) = \ch \str \cdot q_r(\la).$$ 
The following result shows how the character $q_r(\la)$ plays a roll in expressing $\chi$ in terms of certain irreducible characters. 

\begin{lemma} \cite[2.6 Lemma]{Jan81}\label{L:Jantzen} For $\la \in X_r,$ $\nu \in X_+,$ and $\chi \in \mathbb{Z}[X]^W$ one has
$$[\chi : \chi_p(p^r\nu + \la)]_G= [\chi \cdot q_r(\la^*) : \chi_p((p^r-1)\rho + p^r\nu)]_G.$$
\end{lemma}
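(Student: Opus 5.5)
The plan is to reduce the statement to a multiplicity computation for $G_rT$-modules. I will compare both sides through $\widehat{Q}_r(\la^*)$, using that $\str$ is simple and projective over $G_rT$.

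\emph{Reduction to basis elements.} Both sides of the formula are $\mathbb{Z}$-linear in $\chi$, so it suffices to check it for $\chi=\chi_p(\mu)$ with $\mu\in X_+$. The left side is then $\delta_{\mu,\,p^r\nu+\la}$. Write $\mu=\mu_0+p^r\mu_1$ with $\mu_0\in X_r$ and $\mu_1\in X_+$. By Steinberg's tensor product theorem, $\chi_p(\mu)=\chi_p(\mu_0)\,\chi_p(\mu_1)^{(r)}$. Likewise $\chi_p((p^r-1)\rho+p^r\nu)=\chi((p^r-1)\rho)\,\chi_p(\nu)^{(r)}$, as noted in the proof of Proposition~\ref{P:chiSt2}. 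So the right side becomes
$$[\chi_p(\mu_0)\,q_r(\la^*)\,\chi_p(\mu_1)^{(r)} : \chi((p^r-1)\rho)\,\chi_p(\nu)^{(r)}]_G .$$

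\emph{Key step: expand $\chi_p(\mu_0)\,q_r(\la^*)$.} I claim that
$$\chi_p(\mu_0)\,q_r(\la^*)=\sum_{\eta\in X}c_\eta\, e^{p^r\eta},$$
that is, after multiplying by $\ch\str$ it is a sum of characters $\ch\str\cdot e^{p^r\eta}$. Moreover the sum is $W$-invariant, and the $W$-orbit sum of $e^{0}$ has coefficient $\delta_{\mu_0,\la}$. To prove this:
\begin{itemize}
\item Multiplying by $\ch\str$ gives $\ch\bigl(L(\mu_0)\otimes\widehat{Q}_r(\la^*)\bigr)$. This is a projective $G_rT$-module.
\item It therefore decomposes into the modules $\widehat{Q}_r(\sigma+p^r\eta)$, whose characters are $\ch\str\cdot q_r(\sigma)\,e^{p^r\eta}$.
\item The multiplicity of $\str\otimes p^r\eta=\widehat{Q}_r((p^r-1)\rho+p^r\eta)$ equals
$$\dim\Hom_{G_rT}\bigl(L(\mu_0)\otimes \widehat{Q}_r(\la^*),\,\str\otimes p^r\eta\bigr)=\dim\Hom_{G_rT}\bigl(\widehat{Q}_r(\la^*),\,L(\mu_0)^*\otimes\str\otimes p^r\eta\bigr).$$
\item The last dimension counts $G_rT$-composition factors $L(\la^*)\otimes p^r\eta$ of $L(\mu_0)^*\otimes\str$, which by adjunction is $\dim\Hom_{G_rT}\bigl(L(\la^*)\otimes L(\mu_0)\otimes p^r\eta,\ \str\bigr)$.
\end{itemize}
Since $\str$ is simple and injective over $G_rT$, this last quantity is the multiplicity of $\str\otimes p^{r}(-\eta)$ as a summand of $L(\la^*)\otimes L(\mu_0)$ twisted appropriately. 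For $\eta=0$ it equals $\dim\Hom_{G_r}(L(\mu_0),L(\la))=\delta_{\mu_0,\la}$.

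Summands of the form $\widehat{Q}_r(\sigma)$ with $\sigma\neq(p^r-1)\rho$ also occur. I will dispose of them by the triangularity $\ch\widehat{Q}_r(\sigma)\in \ch\str\cdot e^{?}\,\chi_p$-combinations of lower terms. Concretely, I will show that, after factoring out $\ch\str$, they contribute only $\chi_p(\xi)$ with $\xi$ not of the form $(p^r-1)\rho+p^r\nu$ modulo the lattice $p^rX$.

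\emph{Conclusion.} Multiplying by $\chi_p(\mu_1)^{(r)}$ and extracting the coefficient of $\chi((p^r-1)\rho)\chi_p(\nu)^{(r)}$ then reduces, by Steinberg's tensor product theorem applied to Frobenius-twisted characters, to the coefficient of $\chi_p(\nu)^{(r)}$ in $\delta_{\mu_0,\la}\,\chi_p(\mu_1)^{(r)}$ plus terms that vanish. This gives $\delta_{\mu_0,\la}\delta_{\mu_1,\nu}=\delta_{\mu,\,p^r\nu+\la}$, matching the left side.

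The main obstacle is the key step: showing that only the $\eta=0$, $\sigma=(p^r-1)\rho$ contribution survives. In particular one must control the summands $\widehat{Q}_r(\sigma+p^r\eta)$ with $\sigma\neq(p^r-1)\rho$, whose characters are not visibly of the form $\ch\str\cdot(\text{twisted})$. My first attempt would be an induction on $\la$ with respect to $\leq$, using that $q_r(\la^*)$ has highest term $e^{\la^*-(p^r-1)\rho}$-type leading behaviour. Failing that, I would use Jantzen's realization of $\widehat{Q}_r$ inside $\str\otimes L((p^r-1)\rho+w_0\la^*)$.
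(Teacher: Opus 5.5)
\textbf{Verdict: there is a genuine gap.} The paper gives no proof of this lemma; it quotes it from Jantzen. Measured against what a proof needs, your key step fails.

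\textbf{What goes wrong.} Your reduction to $\chi=\chi_p(\mu)$ and the Steinberg factorization are fine; the trouble starts after that.
\begin{itemize}
\item The claim that $\chi_p(\mu_0)\,q_r(\la^*)$ lies in the span of the $e^{p^r\eta}$ is false. For $\mu_0=\la=0$, $\widehat{Q}_r(0)$ has highest weight $2(p^r-1)\rho$. So $q_r(0)=\chi_p((p^r-1)\rho)+(\text{lower terms})$, and $(p^r-1)\rho\notin p^rX$.
\item Even if the claim held, your conclusion would not follow. A Frobenius-twisted character has coefficient $0$ at $\chi((p^r-1)\rho)\chi_p(\nu)^{(r)}$.
\item The underlying confusion is between two characters. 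The lemma concerns the $\str$-type factors of $\chi_p(\mu_0)q_r(\la^*)$. You instead count $\str\otimes p^r\eta$-summands of $\ch\str\cdot\chi_p(\mu_0)q_r(\la^*)=\ch(L(\mu_0)\otimes\widehat{Q}_r(\la^*))$.
\item Write $\chi_p(\mu_0)q_r(\la^*)=\sum_\sigma m_\sigma q_r(\sigma)$ according to the summands $\widehat{Q}_r(\sigma)$. A summand $\str\otimes p^r\eta$ contributes $e^{p^r\eta}$, which has no composition factor of the form $\str\otimes p^r\eta'$. The summands that actually matter are the $\widehat{Q}_r(p^r\eta)$; for instance $q_r(0)$ contains $\chi_p((p^r-1)\rho)$. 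So your plan to discard every $\sigma\neq(p^r-1)\rho$ is exactly backwards.
\item The bullet computation also breaks. The step ``by adjunction'' replaces a composition multiplicity by a Hom-dimension. The value $\delta_{\mu_0,\la}$ at $\eta=0$ is also wrong: for $\mu_0=\la=0$, $L(0)\otimes\widehat{Q}_r(0)=\widehat{Q}_r(0)$ is indecomposable and is not $\str$, so it has no $\str$-summand.
\end{itemize}

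\textbf{The missing idea.} Compute composition multiplicities as Homs out of projective covers, then move $q_r$ across by tensor--Hom adjunction. Let $\widehat{L}_r(\sigma)$, $\sigma\in X$, be the simple $G_rT$-modules, and let $M$ be any finite-dimensional $G_rT$-module.
\begin{itemize}
\item First, $[M:\widehat{L}_r(\la+p^r\eta)]_{G_rT}=\dim\Hom_{G_rT}(\widehat{Q}_r(\la)\otimes p^r\eta,M)$.
\item Write $q_r(\la)=\ch F_1-\ch F_2$ with $G$-modules $F_i$. Then $\widehat{Q}_r(\la)\oplus(\str\otimes F_2)\cong\str\otimes F_1$ over $G_rT$, since projective $G_rT$-modules are determined by their characters.
\item Adjunction gives $\Hom_{G_rT}(\str\otimes F_i\otimes p^r\eta,M)\cong\Hom_{G_rT}(\str\otimes p^r\eta,F_i^*\otimes M)$, and $\str\otimes p^r\eta$ is simple and projective.
\end{itemize}
Using $q_r(\la)^*=q_r(\la^*)$, this gives for all $\eta\in X$:
\[
[M:\widehat{L}_r(\la+p^r\eta)]_{G_rT}=[\,q_r(\la^*)\cdot\ch M:\widehat{L}_r((p^r-1)\rho+p^r\eta)\,]_{G_rT}.
\]
Next, Steinberg's tensor product theorem gives, for $\psi\in\mathbb{Z}[X]^W$ and $\sigma_0\in X_r$,
\[
\sum_{\eta}[\psi:\widehat{L}_r(\sigma_0+p^r\eta)]_{G_rT}\,e^{\eta}=\sum_{\nu\in X_+}[\psi:\chi_p(\sigma_0+p^r\nu)]_G\,\chi_p(\nu).
\]
Check this on $\psi=\chi_p(\tau_0)\chi_p(\tau_1)^{(r)}$. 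Apply it with $(\sigma_0,\psi)=(\la,\chi)$ and with $\bigl((p^r-1)\rho,\chi\,q_r(\la^*)\bigr)$, then compare coefficients of $\chi_p(\nu)$. This proves the lemma.

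\textbf{Repairing your own route.} Taking $M=p^r\eta$ in the displayed identity gives $[q_r(\sigma):\widehat{L}_r((p^r-1)\rho+p^r\xi)]_{G_rT}=\delta_{\sigma,p^r\xi}$. The multiplicity of $\widehat{Q}_r(p^r\eta)$ in $L(\mu_0)\otimes\widehat{Q}_r(\la^*)$ is its head multiplicity, namely $\delta_{\mu_0,\la}\delta_{\eta,0}$. These two facts are what your summand-by-summand approach actually needed.
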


\section{Connections with the theorem by Chastkofsky and Jantzen}

\subsection{} 
Following \cite[2.7]{Jan81}, we assign to every $\gfq$-module $V$ a Brauer character $\Psi (V).$ These are complex class functions on $\gfq$ that vanish on $p$-singular elements. One extends these assignments to elements  $\chi \in \mathbb{Z}[X]^W$ via 
$$\Psi(\chi) = \sum_{\la \in X_r} [\chi : \chi_p(\la)]_{\gfq} \Psi(L(\la)).$$
Since the Steinberg module $\str$ is  injective as a $\gfq$-module, given a $G$-module $V$, the tensor product $\str \otimes V$ is also injective as a $\gfq$-module. The Brauer character of $\str \otimes V$ can therefore be expressed as a finite linear combination of $\Psi (U_r(\mu))$s. More generally, 
the set $\{\Psi(U_r(\mu))|\; \mu \in X_r\}$ forms a basis for the image under $\Psi$ of the ideal generated by the Steinberg character $\ch \str$ in $\mathbb{Z}[X]^W.$
For any $\chi \in \mathbb{Z}[X]^W$, one can define integers $
[\ch \str \cdot \chi:U_r(\mu)]$ via
\begin{equation}\label{brauer} \Psi(\ch \str \cdot \chi) = \sum_{\mu \in X_r} [\ch \str \cdot \chi:U_r(\mu)] \Psi(U_r(\mu)).
\end{equation}
In particular, for all $\la, \mu \in X_r$, we can define $[\widehat{Q}_r(\la) :U_r(\mu)]$ from
\begin{eqnarray*}
\Psi (\ch \widehat{Q}_r(\la)) =\Psi(\ch \str \cdot q_r(\la))
&=& \sum_{\mu \in X_r} [\ch \str \cdot q_r(\la) :U_r(\mu)] \Psi(U_r(\mu))\\
&=& \sum_{\mu \in X_r} [\widehat{Q}_r(\la) :U_r(\mu)] \Psi(U_r(\mu)).
\end{eqnarray*}
If we denote by $\langle \;,\; \rangle$ the standard inner product on class functions, then, for $\la, \mu \in X_r$,  
$\langle \Psi(L(\la)), \Psi(U_r(\mu) )\rangle = \delta_{\la \mu},$ and for any $\chi \in \mathbb{Z}[X]^W$ one obtains 
$$\langle \Psi(\chi), \Psi(U_r(\mu) )\rangle = [\chi: \chi_p(\mu)]_{\gfq}.$$
Applying the inner product to   \eqref{brauer} yields $$[\ch\str \cdot \chi :U_r(\mu)]=[\ch \str \cdot \chi: \chi_p(\mu)]_{\gfq}.$$
In addition, for any $G$-module $V$ one obtains 
\begin{eqnarray*}
[\ch \str \cdot \ch V:U_r(\mu)]&=&\dim  \Hom_{\gfq}(L(\mu), \str \otimes V)\\ &=&\dim \Hom_{\gfq}(L(\mu)\otimes V^*, \str )=[\chi_p(\mu)\cdot \ch V^*: \str]_{\gfq}.
\end{eqnarray*}
This can be generalized to any  $\chi \in \mathbb{Z}[X]^W$. So we may identify
$\displaystyle{[\ch\str \cdot \chi :U_r(\mu)]}$ with 
$\displaystyle{[\chi_p(\mu)\cdot\chi^*: \str]_{\gfq},}$
where $\chi^*$ denotes the formal dual of $\chi$. 


In particular, using the fact that $q_r(\la)^* = q_r(\la^*)$, one obtains for all $\la, \mu \in X_r$
\begin{equation}\label{E:QtoU}
[\widehat{Q}_r(\la) : U_r(\mu)] = [\ch \str \cdot q_r(\la) :U_r(\mu)]=[\chi_p(\mu)\cdot q_r(\la^*): \str]_{\gfq}.
\end{equation}

Let $\mu \in X_{+}$ be a not necessarily a $p^r$-restricted weight. Using Proposition \ref{P:chiSt2} (with $\chi = \chi_p(\mu)\cdot q_r(\la^*)$) and Lemma \ref{L:Jantzen} (with $\chi = \chi_p(\mu)\cdot\chi_p(\nu)$), one can now argue as follows:
\begin{eqnarray*}
[\chi_p(\mu)\cdot q_r(\la^*): \str]_{\gfq}
&=&
 \sum_{\nu \in X(T)_{+}}[\chi_p(\mu)\cdot q_r(\la^*) \cdot \chi_p(\nu): \chi((p^r-1)\rho) \cdot \chi_p(\nu)^{(r)}]_G\\
 &=&
 \sum_{\nu \in X(T)_{+}}[\chi_p(\mu) \cdot \chi_p(\nu): \chi_p(\la) \cdot \chi_p(\nu)^{(r)}]_G.
\end{eqnarray*}
Assuming that $\mu \in X_r$, from this and (\ref{E:QtoU}), one recovers the theorem of Chastkofsky and Jantzen.
 
 \begin{theorem} \cite[Corollary 2]{Cha81}, \cite[2.10 Corollary 2]{Jan81} \label{T:ChJan} For all $\la , \mu \in X_r$ 
 $$[\widehat{Q}_r(\la) :U_r(\mu)] = \sum_{\nu \in X_+(T)} [L(\mu) \otimes L(\nu) : L(\la + p^r\nu)]_G.$$
 \end{theorem}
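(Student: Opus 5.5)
The argument essentially assembles the computation carried out just before the statement. Fix $\la,\mu \in X_r$. First I would use \eqref{E:QtoU} to rewrite the left side as $[\chi_p(\mu)\cdot q_r(\la^*):\str]_{\gfq}$. This step depends on the Brauer character identifications from the preceding subsection. It also uses the symmetry $q_r(\la)^*=q_r(\la^*)$, which I would justify by duality: $\widehat{Q}_r(\la)^*\cong \widehat{Q}_r(\la^*)$ as $G_rT$-modules, and $\str$ is self-dual, so the formal dual of $\ch\str\cdot q_r(\la)$ equals $\ch\str\cdot q_r(\la^*)$. Since $\ch\str$ is not a zero divisor in $\mathbb{Z}[X]^W$, this gives the symmetry.

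Next I would apply Proposition~\ref{P:chiSt2} with $\chi=\chi_p(\mu)\cdot q_r(\la^*)\in\mathbb{Z}[X]^W$. This expresses the quantity as
$$\sum_{\nu\in X_+}[\chi_p(\mu)\cdot\chi_p(\nu)\cdot q_r(\la^*):\chi((p^r-1)\rho)\cdot\chi_p(\nu)^{(r)}]_G .$$
By Steinberg's tensor product theorem, $\chi((p^r-1)\rho)\cdot\chi_p(\nu)^{(r)}=\chi_p((p^r-1)\rho+p^r\nu)$, since $\str$ is simple and equals $\nabla((p^r-1)\rho)$. Each summand is therefore of the form treated by Lemma~\ref{L:Jantzen}, applied with the $W$-invariant character $\chi_p(\mu)\cdot\chi_p(\nu)$ and the restricted weight $\la$. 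The lemma turns each summand into $[\chi_p(\mu)\cdot\chi_p(\nu):\chi_p(p^r\nu+\la)]_G$.

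Finally, $\chi_p(\mu)\cdot\chi_p(\nu)=\ch(L(\mu)\otimes L(\nu))$, and $L(\la+p^r\nu)$ has character $\chi_p(\la)\cdot\chi_p(\nu)^{(r)}$. Because the $\chi_p$'s form a basis of $\mathbb{Z}[X]^W$, the coefficient $[\ch(L(\mu)\otimes L(\nu)):\chi_p(\la+p^r\nu)]_G$ is exactly the composition multiplicity $[L(\mu)\otimes L(\nu):L(\la+p^r\nu)]_G$. Summing over $\nu$ gives the stated formula.

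The main point needing care is finiteness and legitimacy of the sums. In Proposition~\ref{P:chiSt2} only finitely many $\nu$ contribute for fixed $\chi$. On the right-hand side, $L(\la+p^r\nu)$ can occur in $L(\mu)\otimes L(\nu)$ only if $\la+p^r\nu\le\mu+\nu$. This bounds $(p^r-1)\nu$ and leaves finitely many $\nu$. Since the identification is termwise, both sums range over the same finite set of nonzero terms, and no rearrangement issue arises. The remaining care is bookkeeping: the identity $q_r(\la)^*=q_r(\la^*)$, and making sure the $\la$ versus $\la^*$ in Lemma~\ref{L:Jantzen} matches, i.e.\ applying the lemma to the pair $(\la,q_r(\la^*))$ exactly as stated.
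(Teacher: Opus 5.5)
Your proposal is correct and follows the paper's argument essentially verbatim: rewrite the left side via \eqref{E:QtoU}, apply Proposition~\ref{P:chiSt2} with $\chi=\chi_p(\mu)\cdot q_r(\la^*)$, identify $\chi((p^r-1)\rho)\cdot\chi_p(\nu)^{(r)}$ with $\chi_p((p^r-1)\rho+p^r\nu)$, and then apply Lemma~\ref{L:Jantzen} termwise with $\chi=\chi_p(\mu)\cdot\chi_p(\nu)$. You also supply two details the paper leaves implicit, and both are sound: the identity $q_r(\la)^*=q_r(\la^*)$ (from $\widehat{Q}_r(\la)^*\cong\widehat{Q}_r(\la^*)$, self-duality of $\str$, and $\mathbb{Z}[X]$ being a domain), and the finiteness of the sums.
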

 
\subsection{} \label{S:Ext-theorems} In this section, we demonstrate how the decomposition of tensor products as in Theorem~\ref{T:ChJan} arises when comparing $\text{Ext}^{1}$-groups between simple modules for 
$\gfq$ to those for extensions over $G$. First, we state a special case of Proposition~\ref{P:Ext2}. 

\begin{prop}  Let $\lambda,\mu\in X_{r}$. Then
$$\operatorname{Ext}^{1}_{G({\Bbb F}_{q})}(L(\lambda),L(\mu))\cong
\operatorname{Ext}^{1}_{G}(L(\lambda),L(\mu)
\otimes {\mathcal G}(k)).$$
\end{prop}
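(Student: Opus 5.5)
This is the case $n=1$ of Proposition~\ref{P:Ext2}, applied with $M = L(\lambda)$ and $N = L(\mu)$.

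First we check the hypotheses. Since $\lambda,\mu\in X_r$, the modules $L(\lambda)$ and $L(\mu)$ are simple rational $G$-modules. By the discussion in Section~\ref{S:notation}, their restrictions to $\gfq$ are precisely the simple $\gfq$-modules labeled by $\lambda$ and $\mu$. Hence the left-hand side, computed over $\gfq$, is the $\operatorname{Ext}^1$ between the simple $\gfq$-modules $L(\lambda)$ and $L(\mu)$.

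Next we apply Proposition~\ref{P:Ext2} with $n=1$, $M=L(\lambda)$ and $N=L(\mu)$. This gives
$$\operatorname{Ext}^1_{\gfq}(L(\lambda),L(\mu))\cong \operatorname{Ext}^1_G(L(\lambda),L(\mu)\otimes\mathcal{G}(k)),$$
which is exactly the statement.

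If one wants the argument to be self-contained rather than citing Proposition~\ref{P:Ext2}, the route is as follows.
\begin{itemize}
\item[(1)] The tensor identity gives $\mathcal{G}(N|_{\gfq})\cong N\otimes\mathcal{G}(k)$ for a rational $G$-module $N$.
\item[(2)] Proposition~\ref{P:Ext1} then identifies $\operatorname{Ext}^1_{\gfq}(M,N)$ with $\operatorname{Ext}^1_G(M,\mathcal{G}(N))$.
\item[(3)] Proposition~\ref{P:Ext1} itself rests on the exactness of $\ind_{\gfq}^G$, which holds because $G/\gfq$ is affine, so generalized Frobenius reciprocity applies without higher derived terms.
\end{itemize}

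There is no real obstacle here. The only points requiring care are that the simple $\gfq$-modules are indeed the restrictions $L(\lambda)$ for $\lambda\in X_r$, and that the tensor identity is used with $N$ a genuine $G$-module.
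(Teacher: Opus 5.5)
Your proposal is correct and matches the paper, which presents this result as the special case $n=1$, $M=L(\lambda)$, $N=L(\mu)$ of Proposition~\ref{P:Ext2}. Your optional self-contained route, via the tensor identity $\mathcal{G}(N)\cong N\otimes\mathcal{G}(k)$ combined with Proposition~\ref{P:Ext1}, is also consistent with how the paper obtains that proposition.
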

Recall from Propostion \ref{P:Filtration} that ${\mathcal G}(k)$ has a filtration with factors $\nabla(\nu) \otimes \nabla(\nu^*)^{(r)}.$ However, it turns out that, for $p \geq 3(h-1),$ only  those factors with weights in $\Gamma_{h}=\{\nu\in X_{+}~|~\langle \nu, \alpha_{0}^{\vee}\rangle < h\}$ contribute to the calculation. One may use a truncated version of ${\mathcal G}(k)$.
Now from \cite[Theorem 7.4]{BNP1}, if $p \geq 3(h-1)$,
this truncated module  is semisimple and isomorphic to 
$\displaystyle{
 \bigoplus_{\nu\in \Gamma_{h}} L(\nu)\otimes L(\nu^*)^{(r)}}
.$
The following theorem states the connection between $\text{Ext}^{1}$-groups that is reminiscent of the theorem of Chastkofsky and Jantzen. Note that this uses the general $\text{Ext}^{1}$-result and the structure of the appropriately truncated
${\mathcal G}(k)$ as described above. An interesting open problem would be to find other results of this nature. 

\begin{theorem} \cite[Theorem 2.5]{BNP3} For $p \geq 3(h-1)$ and $\lambda, \mu\in X_{r}$,
  $$\Ext_{\gfq}^{1}(L(\la),L(\mu)) \cong
\bigoplus_{\nu\in \Gamma_{h}} \Ext_{G}^{1}(L(\la+p^{r}\nu),L(\mu)\otimes L(\nu)),$$
where $\Gamma_{h}=\{\nu\in X(T)_{+} ~|~\langle \nu, \alpha_{0}^{\vee}\rangle < h\}.$
\end{theorem}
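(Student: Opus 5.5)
We prove the theorem by applying Proposition~\ref{P:Ext2} with $n=1$ and then analysing $L(\mu)\otimes\mathcal{G}(k)$ through the filtration of Proposition~\ref{P:Filtration}. This gives
$$\Ext^1_{\gfq}(L(\la),L(\mu))\cong \Ext^1_G(L(\la),L(\mu)\otimes \mathcal{G}(k)).$$
The proof then has two steps. First, we replace $\mathcal{G}(k)$ by its truncation $\mathcal{G}(k)_{\Gamma_h}$, the part of the filtration built from factors $\nabla(\nu)\otimes\nabla(\nu^*)^{(r)}$ with $\nu\in\Gamma_h$. Second, we use the semisimplicity result from \cite[Theorem 7.4]{BNP1} quoted above to rewrite the truncated Ext group.

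\textbf{Step 1: truncation.} Order the filtration of $\mathcal{G}(k)$ so that $\mathcal{G}(k)_{\Gamma_h}$ is a submodule with quotient $Q$ filtered by $\nabla(\nu)\otimes\nabla(\nu^*)^{(r)}$ for $\nu\notin\Gamma_h$. This ordering should be possible because the factors come from a filtration of $k[G/\gfq]$ by degree in $\nu$. The long exact sequence for $\Ext_G(L(\la),L(\mu)\otimes -)$ gives the isomorphism we want, provided
$$\Hom_G(L(\la),L(\mu)\otimes Q)=0=\Ext^1_G(L(\la),L(\mu)\otimes Q).$$
It is enough to prove these vanishings for each factor $N_\nu=L(\mu)\otimes\nabla(\nu)\otimes\nabla(\nu^*)^{(r)}$. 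We apply the Lyndon--Hochschild--Serre spectral sequence for $G_r\trianglelefteq G$:
$$E_2^{i,j}=\Ext^i_{G/G_r}\bigl(k,\ \Ext^j_{G_r}(L(\la),L(\mu)\otimes\nabla(\nu))^{(-r)}\otimes\nabla(\nu^*)\bigr)\ \Rightarrow\ \Ext^{i+j}_G(L(\la),N_\nu),$$
and we only need total degree $i+j\le 1$. If $\Hom$ or $\Ext^1_G$ is nonzero, then some $\nabla(\nu^*)$ has a nonzero $\Ext^{\le1}_G$ from a composition factor of $\Ext^j_{G_r}(L(\la),L(\mu)\otimes\nabla(\nu))^{(-r)}$ with $j\le 1$. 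The weights $\sigma$ of such factors satisfy $p^r\sigma\le$ (a weight of $L(\mu)\otimes\nabla(\nu)$) $-\la$, up to a sum of $j$ roots from $G_r$-cohomology, since $\Ext^1_{G_r}$ is controlled by $\mathfrak{u}^*$-weights. Comparing with $\nu^*$ through linkage or dominance, and using $\langle\mu,\al_0^\vee\rangle,\langle\la,\al_0^\vee\rangle\le (p^r-1)(h-1)$, forces $\langle\nu,\al_0^\vee\rangle<h$ once $p\ge 3(h-1)$. This is the main obstacle. The precise bookkeeping of $\al_0^\vee$-pairings, including the roots contributed by $\Ext^1_{G_r}$, is exactly where the bound $3(h-1)$ must appear. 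I would follow the estimates in \cite{BNP1,BNP3}.

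\textbf{Step 2: rewriting the truncated Ext group.} By \cite[Theorem 7.4]{BNP1}, for $p\ge 3(h-1)$,
$$\mathcal{G}(k)_{\Gamma_h}\cong\bigoplus_{\nu\in\Gamma_h}L(\nu)\otimes L(\nu^*)^{(r)}.$$
Hence
$$\Ext^1_{\gfq}(L(\la),L(\mu))\cong\bigoplus_{\nu\in\Gamma_h}\Ext^1_G\bigl(L(\la),L(\mu)\otimes L(\nu)\otimes L(\nu^*)^{(r)}\bigr).$$
Since $L(\nu^*)^{(r)}\cong (L(\nu)^{(r)})^*$, we may move it to the left:
$$\Ext^1_G\bigl(L(\la),L(\mu)\otimes L(\nu)\otimes L(\nu^*)^{(r)}\bigr)\cong\Ext^1_G\bigl(L(\la)\otimes L(\nu)^{(r)},\,L(\mu)\otimes L(\nu)\bigr).$$
Because $\la\in X_r$, Steinberg's tensor product theorem gives $L(\la)\otimes L(\nu)^{(r)}\cong L(\la+p^r\nu)$. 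This yields the stated decomposition.
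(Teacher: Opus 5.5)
Your proposal follows the same route as the paper: Proposition~\ref{P:Ext2} with $n=1$, then replacing $\mathcal{G}(k)$ by its $\Gamma_h$-truncation (which the paper, like you in the end, takes from the BNP papers rather than proving), then the semisimplicity result \cite[Theorem 7.4]{BNP1}, and finally adjunction together with Steinberg's tensor product theorem to turn $\Ext^1_G(L(\la),L(\mu)\otimes L(\nu)\otimes L(\nu^*)^{(r)})$ into $\Ext^1_G(L(\la+p^r\nu),L(\mu)\otimes L(\nu))$. One caution: the weight heuristic you sketch in Step~1 would not by itself produce $\Gamma_h$, because for $j=1$ the weights of $\Ext^1_{G_r}(L(\la),-)$ are governed by $L(\la)^*\otimes(-)$, so one adds $\la^*$ rather than subtracting $\la$ (for $SL_2$ and $1\le a\le p-2$, $\Ext^1_{G_1}(L(a),L(p-2-a))\cong L(1)^{(1)}$ has weight $p>(p-2-a)-a+2$), and this crude counting only bounds $\langle\nu,\alpha_0^\vee\rangle$ by roughly $2h-2$; the cut-down to $\langle\nu,\alpha_0^\vee\rangle<h$ really rests on the finer BNP estimates that both you and the paper cite.
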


\subsection{} For any $\mu \in X_{r}$, the module $U_{r}(\mu)$ is an injective module for $\gfq$. Therefore, 
${\mathcal G}(U_{r}(\mu))$ is an injective $G$-module. The following result provides the number of injective indecomposable $G$-summands in ${\mathcal G}(U_{r}(\mu))$ via 
tensor product decompositions in $\gfq$. 

\begin{prop} \label{P:GonU} Let $\mu\in X_{r}$ and $\sigma\in X_{+}$. Express $\sigma=\sigma_{0}+p^{r}\sigma_{1}$ where $\sigma_{0}\in X_{r}$ and 
$\sigma_{1}\in X_{+}$. Then 
$$[{\mathcal G}(U_{r}(\mu)):I(\sigma)]=[L(\sigma_{0})\otimes L(\sigma_{1}):L(\mu)]_{\gfq}.$$
\end{prop}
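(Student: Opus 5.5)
Since ${\mathcal G}(U_r(\mu))$ is an injective $G$-module, it decomposes as a direct sum of indecomposable injectives $I(\sigma)$. For an injective $G$-module $J$, the multiplicity of $I(\sigma)$ as a summand equals $\dim\Hom_G(L(\sigma),J)$. This holds because $\Hom_G(L(\sigma),I(\tau))$ is one-dimensional if $\sigma=\tau$ and zero otherwise, and $\Hom_G(L(\sigma),-)$ commutes with arbitrary direct sums since $L(\sigma)$ is finite dimensional. The plan is therefore to compute
$$[{\mathcal G}(U_r(\mu)):I(\sigma)]=\dim\Hom_G(L(\sigma),{\mathcal G}(U_r(\mu)))$$
and then transfer to $\gfq$ using Proposition~\ref{P:Ext1} with $n=0$.

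Applying Proposition~\ref{P:Ext1} gives $\dim\Hom_G(L(\sigma),{\mathcal G}(U_r(\mu)))=\dim\Hom_{\gfq}(L(\sigma),U_r(\mu))$, where $L(\sigma)$ is regarded by restriction as a $\gfq$-module. Next, Steinberg's tensor product theorem gives $L(\sigma)\cong L(\sigma_0)\otimes L(\sigma_1)^{(r)}$ as $G$-modules. Since $F^r$ is the identity on $\gfq$, the Frobenius twist $(r)$ is trivial on restriction to $\gfq$. Hence $L(\sigma)|_{\gfq}\cong L(\sigma_0)\otimes L(\sigma_1)$.

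Finally, $U_r(\mu)$ is the injective hull of the simple module $L(\mu)$ over the finite group $\gfq$, and $\operatorname{End}_{\gfq}(L(\mu))=k$ because $k$ is algebraically closed. Therefore, for any finite-dimensional $\gfq$-module $V$, $\dim\Hom_{\gfq}(V,U_r(\mu))=[V:L(\mu)]_{\gfq}$. This follows by induction on composition length using exactness of $\Hom_{\gfq}(-,U_r(\mu))$ together with $\Hom_{\gfq}(L(\tau),U_r(\mu))=\delta_{\tau\mu}k$ for $\tau\in X_r$. Applying this to $V=L(\sigma_0)\otimes L(\sigma_1)$ gives the stated formula.

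The main point needing care is the first step: identifying summand multiplicities in a possibly infinite direct sum of injectives with $\dim\Hom_G(L(\sigma),-)$. This requires that the socle of ${\mathcal G}(U_r(\mu))$ determines its injective decomposition (the injective hull of the socle) and that the socle multiplicities are finite. Finiteness follows from the computation itself, since the right-hand side is finite. The remaining steps are routine.
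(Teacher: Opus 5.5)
Your proposal is correct and follows essentially the same route as the paper: count $L(\sigma)$ in the $G$-socle of $\mathcal{G}(U_r(\mu))$ via $\dim\Hom_G(L(\sigma),-)$, pass to $\gfq$ by Proposition~\ref{P:Ext1}, use Steinberg's tensor product theorem with the twist becoming trivial on $\gfq$, and read off the composition multiplicity from the injectivity of $U_r(\mu)$. Your extra justification of the socle-counting step, via $\Hom_G(L(\sigma),I(\tau))=\delta_{\sigma\tau}k$ and compatibility with direct sums, fills in what the paper leaves implicit.
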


\begin{proof} One can calculate $[{\mathcal G}(U_{r}(\mu)):I(\sigma)]$ by looking at the number of times $L(\sigma)$ appears in the $G$-socle of 
${\mathcal G}(U_{r}(\mu))$. This is given by 
\begin{eqnarray*} 
\dim \text{Hom}_{G}(L(\sigma), {\mathcal G}(U_{r}(\mu)))&=& \dim \text{Hom}_{\gfq}(L(\sigma), U_{r}(\mu))\\
&= & \dim \text{Hom}_{\gfq}(L(\sigma_{0})\otimes L(\sigma_{1})^{(r)}, U_{r}(\mu))\\
&= & \dim \text{Hom}_{\gfq}(L(\sigma_{0})\otimes L(\sigma_{1}), U_{r}(\mu))\\
&=& [L(\sigma_{0})\otimes L(\sigma_{1}):L(\mu)]_{\gfq}.
\end{eqnarray*}
\end{proof}

\subsection{} The $G$-structure on the image of ${\mathcal G}$ on an injective $\gfq$-module can be made more transparent by using the theorem of Chastkofsky and Jantzen.
Since all the terms on the right hand side of Theorem~\ref{T:ChJan} are nonnegative, one obtains an injective $\gfq$-module, denoted by $\bar{Q}_r(\la)$, whose Brauer character equals that of the original $\widehat{Q}_r(\la),$ via  
$$\bar{Q}_r(\la) :=\bigoplus_{\mu \in X_r}
U_r(\mu)^{ \left(\sum_{\nu \in X_+} [L(\mu) \otimes L(\nu) : L(\la + p^r\nu)]_G\right)}.$$
For any dominant weight  $\mu$ one obtains from above
\begin{eqnarray*}
\dim \Hom_{\gfq}(L(\mu), \bar{Q}(\la))&=&[\chi_p(\mu)\cdot q_r(\la^*): \str]_{\gfq}\\
&=&
 \sum_{\nu \in X_+}[\chi_p(\mu) \cdot \chi_p(\nu): \chi_p(\la) \cdot \chi_p(\nu)^{(r)}]_G.\\
 &=& \sum_{\nu \in X_+} \dim \Hom_G(L(\mu) \otimes L(\nu), I(\la +p^r\nu))\\
 &=&  \dim \Hom_G(L(\mu),\bigoplus_{\nu \in X_+} ( L(\nu)\otimes I(\la +p^r\nu^*))).
\end{eqnarray*} 
On the other hand, the module $\mathcal{G}(\bar{Q}_r(\la))$ now makes sense. It is an infinite dimensional injective $G$-module. Observe that for $\la \in X_r$ and $\mu \in X_+$
\begin{eqnarray*}
\dim\Hom_G(L( \mu),\mathcal{G}(\bar{Q}_r(\la)) &=& \dim \Hom_{\gfq}(L(\mu), \bar{Q}(\la))\\
&=&
 \dim \Hom_G(L(\mu),\bigoplus_{\nu \in X_+} ( L(\nu)\otimes I(\la +p^r\nu^*))).
\end{eqnarray*} 
Hence, $\mathcal{G}(\bar{Q}_r(\la))$ and $\bigoplus_{\nu \in X_+} ( L(\nu)\otimes I(\la +p^r\nu^*))$
are injective $G$-modules with isomorphic $G$-socles. One concludes the following from \cite[I.3.13]{rags}.

\begin{theorem}\label{P:GQbar} For $\la \in X_r$ and $\bar{Q}_r(\la)$ as defined  above, 
\begin{itemize}
\item[(a)] There is an isomorphism of $G$-modules:
$$\mathcal{G}(\bar{Q}_r(\la)) \cong \bigoplus_{\nu \in X_+} ( L(\nu)\otimes I(\la +p^r\nu^*)).$$
\item[(b)] If $\widehat{Q}_r(\la)$ has a G-module structure, one has 
$$\mathcal{G}(\widehat{Q}_r(\la)) \cong \bigoplus_{\nu \in X_+} ( \widehat{Q}_r(\la)\otimes  L(\nu)\otimes I(\nu^*)^{(r)}).$$
\item[(c)] There is an isomorphism of $G$-modules:
$$\mathcal{G}(\operatorname{St}_{r}) \cong \bigoplus_{\nu \in X_+} ( \operatorname{St}_{r} \otimes L(\nu)\otimes I(\nu^*)^{(r)}).$$
\end{itemize} 
\end{theorem}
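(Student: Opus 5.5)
Part (a) rests on the socle comparison that precedes the statement, together with the fact that an injective rational $G$-module is determined up to isomorphism by its socle \cite[I.3.13]{rags}. First I will check that both sides are injective $G$-modules. Since $\bar{Q}_r(\la)$ is an injective $\gfq$-module and induction from $\gfq$ to $G$ is exact and right adjoint to an exact restriction, it sends injectives to injectives. Thus $\mathcal{G}(\bar{Q}_r(\la))$ is injective. On the other side, each $I(\la+p^r\nu^*)$ is injective, and tensoring with a finite-dimensional module preserves injectivity. A direct sum of injectives is injective for rational $G$-modules, since the category is locally Noetherian. Hence $\bigoplus_{\nu}L(\nu)\otimes I(\la+p^r\nu^*)$ is injective.

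The computation above gives $\dim\Hom_G(L(\mu),-)$ equal for both modules and all $\mu\in X_+$. So the socles are isomorphic, provided every multiplicity is finite. This finiteness is the one point needing care. Finite multiplicity holds because $\dim\Hom_{\gfq}(L(\mu),\bar Q_r(\la))$ is finite. Equivalently, only finitely many $\nu$ contribute, since $L(\mu)\otimes L(\nu)$ can contain $L(\la+p^r\nu)$ only when the weight of $\nu$ is bounded by that of $\mu$. Given isomorphic socles, the injective hulls of the two socles are isomorphic, and each module is that hull. This proves (a).

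For (b), suppose $\widehat{Q}_r(\la)$ carries a $G$-structure. By the tensor identity, $\mathcal{G}(\widehat{Q}_r(\la))\cong\widehat{Q}_r(\la)\otimes\mathcal{G}(k)$. Its restriction to $\gfq$ is injective with the same Brauer character as $\bar Q_r(\la)$. Since projective $kG(\mathbb{F}_q)$-modules are determined by their Brauer characters, $\widehat{Q}_r(\la)\cong\bar Q_r(\la)$ as $\gfq$-modules. Hence $\mathcal{G}(\widehat{Q}_r(\la))$ is given by (a).

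It then remains to identify $\bigoplus_\nu L(\nu)\otimes I(\la+p^r\nu^*)$ with $\bigoplus_\nu\widehat{Q}_r(\la)\otimes L(\nu)\otimes I(\nu^*)^{(r)}$. I expect this to be the main obstacle. The right-hand side is injective: $\widehat{Q}_r(\la)$ is $G_r$-injective, so tensoring it with $I(\nu^*)^{(r)}$ gives a $G$-injective module (via the Lyndon–Hochschild–Serre spectral sequence or \cite[II.10]{rags}). I would therefore again compare socles, computing $\Hom_G(L(\mu),\widehat{Q}_r(\la)\otimes L(\nu)\otimes I(\nu^*)^{(r)})$. Using $\Hom_{G_r}(L(\mu)\otimes L(\nu)^*,\widehat{Q}_r(\la))$ and then $G/G_r$-reciprocity, this counts the multiplicity of $L(\la+p^r\nu')$ in $G$-composition factors of $L(\mu)\otimes L(\nu)^*$, summed appropriately. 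After reindexing $\nu\leftrightarrow\nu^*$ over all dominant weights, this matches $\sum_\nu\dim\Hom_G(L(\mu)\otimes L(\nu),I(\la+p^r\nu))$.

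A simpler route to the required identity is $\widehat{Q}_r(\la)\otimes I(\nu^*)^{(r)}\cong$ the injective module with socle $L(\la)\otimes L(\nu^*)^{(r)}=L(\la+p^r\nu^*)$, namely $I(\la+p^r\nu^*)$. This is a known fact when $\widehat{Q}_r(\la)$ lifts (\cite[II.11]{rags}), and using it turns (b) directly into (a). Finally, (c) is the special case $\la=(p^r-1)\rho$. There $\widehat{Q}_r(\la)=\str$ is a $G$-module, and $\bar Q_r(\la)=U_r((p^r-1)\rho)=\str$, so (b) applies with no hypothesis.
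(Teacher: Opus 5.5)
Your proposal is correct and follows the paper's proof. Part (a) is handled the same way: both sides are injective, the socle multiplicities $\dim\Hom_G(L(\mu),-)$ agree, and \cite[I.3.13]{rags} finishes it. Part (b) comes from (a) together with \cite[II.11.16]{rags}, i.e.\ $\widehat{Q}_r(\la)\otimes I(\nu^*)^{(r)}\cong I(\la+p^r\nu^*)$, and (c) is the case $\la=(p^r-1)\rho$.

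You also spell out a step the paper leaves implicit in (b): $\widehat{Q}_r(\la)|_{\gfq}\cong\bar{Q}_r(\la)$, obtained because projective $\gfq$-modules are determined by their Brauer characters. That step relies on the $\gfq$-injectivity of a $G$-lift of $\widehat{Q}_r(\la)$, which you (like the paper) assert as standard rather than prove.
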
 

\begin{proof} Part (a) follows by the aforementioned discussion. Part (b) is an immediate consequence of (a) and \cite[II.11.16 Proposition]{rags}. Part (c) is a special case of (b). 
\end{proof}

\providecommand{\bysame}{\leavevmode\hboxto3em{\hrulefill}\thinspace}

\end{document}